\theoremstyle{definition}
\newtheorem{defi}{Definition}
\newtheorem{theo}[defi]{Theorem}
\newtheorem{ex}[defi]{Example}
\newtheorem{rem}[defi]{Remark}
\def\SO{{\rm SO}}
\def\SL{{\rm SL}}
\def\U{{\rm U}}
\def\R{{\mathbb R}}
\def\C{{\mathbb C}}
\def\F{{\cal F}}
\def\inum{{\sqrt{-1}}}
\def\vol{{d\mu_{g_M}}}
\def\Lb{{{L}^{2m}_{3,b}}}
\def\Eb{{E_b}}
\begin{document}

\title {Generalized Kazdan-Warner equations on foliated manifolds}
\author {Natsuo Miyatake}
\date{}
\maketitle
\begin{abstract} On compact foliated manifolds, we extend the theorem on the existence and uniqueness of solutions to generalized Kazdan-Warner equations. We provide examples of PDEs that we solve, including the transverse Hitchin equation for a diagonal harmonic metric on basic cyclic Higgs bundles over a 3-dimensional complex codimension one foliated manifold, and its generalizations.
\end{abstract}

\section{Introduction}
Kazdan and Warner \cite{KW1} studied the following elliptic PDE on a Riemannian manifold $(M,g_M)$ which is now called the {\it Kazdan-Warner equation}, in connection with the prescribed Gaussian curvature problem on a compact real surface:
\begin{align}
\Delta_{g_M} f+he^f=c, \label{KW}
\end{align}
where $h$ and $c$ are real functions over $M$, $\Delta_{g_M}\coloneqq d^\ast d$ denotes the geometric Laplacian, and $f$ is a solution of (\ref{KW}). Although the primary motivation in \cite{KW1} to introduce this equation was to solve the prescribed Gaussian curvature problem, the Kazdan-Warner equation itself has been studied in various contexts,  including the relation with the $\U(1)$-gauge theory. In \cite{Miy1}, from the point of view of the moment maps for linear torus actions, a generalization of the Kazdan-Warner equation was introduced. In this paper, we extend \cite[Theorem 1]{Miy1} on compact foliated manifolds. Let $(M,\F)$ be a compact connected foliated manifold with a foliation $\F$. We denote by $T\F\subseteq TM$ the integrable distribution associated with the foliation. A differential $p$-form $\phi$ is said to be {\it basic} if $\phi$ satisfies the following for all $X\in\Gamma(T\F)$:
\begin{align*}
i_X\phi=i_Xd\phi=0,
\end{align*}
where we denote by $i_X$ the interior product. Note that a function $f$ is basic if and only if $Xf=0$ for all $X\in\Gamma(T\F)$. Let $\Omega_B^p(M)$ the space of smooth basic $p$-forms. The space of basic forms is preserved by the exterior derivative: $d:\Omega_B^p(M)\rightarrow \Omega_B^{p+1}(M).$ Let $g_M$ be a Riemannian metric on $M$. We denote by $d_B^\ast:\Omega_B^{p+1}(M)\rightarrow \Omega_B^p(M)$ the $L^2$-adjoint of $d:\Omega^p_B(M)\rightarrow \Omega_B^{p+1}(M)$. We define the {\it basic Laplacian} $\Delta_B:\Omega^p_B(M)\rightarrow \Omega^p_B(M)$ as $\Delta_B\coloneqq d_B^\ast d+dd_B^\ast.$
We consider the following equation on $(M,g_M)$ introduced in \cite[Section1]{Miy1}:
\begin{align}
\Delta_{g_M}\xi+\sum_{j=1}^d a_je^{(\iota^\ast u^j \xi)}\iota^\ast u^j=w. \label{GKW}
\end{align}
We assume that $a_1,\dots, a_d$ and $w$ are all smooth functions. We also suppose that for each $j=1,\dots, d$, if $a_j\neq 0$, then $a_j^{-1}(0)$ is a measure zero set and $\log a_j$ is integrable with respect to the smooth Riemannian metric $g_M$ (see \cite[Section 1]{Miy1}). On a foliated manifold $(M,\F)$, the following holds:
\begin{theo}\label{basic} {\it Suppose that $a_1,\dots, a_d$ and $w$ are all basic with respect to the foliation. Suppose also that the Laplacian preserves the space of basic functions. Then the following are equivalent:
\begin{enumerate}[(i)]
\item \label{basic1} Equation (\ref{GKW}) has a $C^\infty$-solution $\xi$;
\item \label{basic2}
The given functions $a_1, \dots, a_d$ and $w$ satisfy
\begin{align}
\int_M w \ \vol\in\sum_{j\in J_a}\R_{>0}\iota^\ast u^j, \label{w}
\end{align}
where $J_a$ denotes $\{j\in\{1, \dots, d\}\mid \text{$a_j$ is not identically 0}\}$;
\item \label{basic3}Equation (\ref{GKW}) has a basic $C^\infty$-solution $\xi$;
\item \label{basic4}There exists a basic $C^\infty$-function $\xi:M\rightarrow k^\ast$ which is a solution of the following equation:
\begin{align}
\Delta_B\xi+\sum_{j=1}^d a_je^{(\iota^\ast u^j, \xi)}\iota^\ast u^j=w. \label{bGKW}
\end{align}
\end{enumerate}
Moreover if $\xi$ and $\xi^\prime$ are $C^\infty$-solutions of equation (\ref{GKW}), then $\xi-\xi^\prime$ is a constant which lies in the orthogonal complement of $\sum_{j\in J_a}\R\iota^\ast u^j$.
}
\end{theo}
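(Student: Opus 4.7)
The plan is to close the cycle (iii) $\Rightarrow$ (i) $\Rightarrow$ (ii) $\Rightarrow$ (iv) $\Rightarrow$ (iii) and to deduce uniqueness from its non-foliated counterpart. The implications not involving the basic structure come essentially for free: (iii) $\Rightarrow$ (i) is immediate, while (i) $\Rightarrow$ (ii) and the uniqueness of $\xi$ modulo constants in the orthogonal complement of $\sum_{j\in J_a}\R\iota^\ast u^j$ are exactly the content of \cite[Theorem 1]{Miy1} applied to $(M,g_M)$ ignoring the foliation, since (\ref{GKW}) is the equation considered there.

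For (iv) $\Rightarrow$ (iii) I would argue that, under the hypothesis that $\Delta_{g_M}$ preserves basic functions, one has $\Delta_{g_M}\xi=\Delta_B\xi$ for every basic $\xi$. Indeed $d\xi\in\Omega_B^1(M)$ automatically, and a short integration-by-parts argument against basic test functions shows that the basic $L^2$-adjoint $d_B^\ast$ coincides with the restriction of $d^\ast$ as soon as the latter already preserves basicness: applied to $\alpha=d\xi$, both $d^\ast\alpha$ and $d_B^\ast\alpha$ are basic, their difference is basic and $L^2$-orthogonal to every basic function, hence vanishes. Thus (\ref{bGKW}) and (\ref{GKW}) are literally the same equation on basic functions.

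The main work is then (ii) $\Rightarrow$ (iv). My approach is to rerun the argument of \cite[Theorem 1]{Miy1} verbatim, but in the closed subspace of basic Sobolev functions rather than the full Sobolev space. The analytic ingredients of that proof — Fredholm theory for $\Delta_{g_M}$, the $L^2$-orthogonal decomposition into constants and the image of the Laplacian, Sobolev embedding, and elliptic regularity — all admit basic counterparts under our preservation hypothesis: $\Delta_{g_M}$ restricts to a self-adjoint elliptic operator on basic $L^2$ with kernel the constants, and basic elliptic regularity promotes basic weak solutions to smooth ones. Because $a_1,\dots,a_d,w$ are basic and the vectors $\iota^\ast u^j$ are constant, the nonlinear term in (\ref{bGKW}) sends basic functions to basic functions, so the Miy1 scheme stays inside the basic complex and produces a basic $C^\infty$ solution.

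I expect the main obstacle to be technical rather than conceptual: verifying that the functional-analytic input of \cite{Miy1} (coercivity, regularity, spectral properties of the Laplacian) transfers cleanly to the basic setting under only the mild preservation hypothesis, rather than the stronger framework of a Riemannian foliation with bundle-like metric standard in the foliation literature. If necessary, this can be supplemented by invoking existing regularity results for basic Laplacians from the foliation literature to bridge any gap between the two settings.
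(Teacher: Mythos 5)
Your proposal is correct and follows essentially the same route as the paper: the equivalence of (i) and (ii) and the uniqueness statement are quoted from \cite[Theorem 1]{Miy1}, the identification of (\ref{GKW}) with (\ref{bGKW}) on basic functions comes from $\Delta_{g_M}=\Delta_B$ on $\Omega_B^0(M)$ under the preservation hypothesis, and the existence of a basic solution is obtained by restricting the variational argument of \cite{Miy1} (the energy functional and its critical points) to the closed subspace of basic Sobolev functions. The paper phrases this last step as restricting the energy functional $E$ to $\Lb(M,k^\ast)$ and re-proving the analogue of \cite[Lemma 4]{Miy1} there, which is exactly the transfer of analytic input you anticipate as the main technical point.
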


\section{Proof and Example}
Before starting the proof, it is worth noting that what is essentially new in the above theorem compared to the previous \cite[Theorem 1]{Miy1} is the derivation of (\ref{basic3}) and (\ref{basic4}) from the others. Additionally, we emphasize that confirming the equivalence of (\ref{basic3}) and (\ref{basic4}) is easy to check. The remaining portions of the proof follow readily from previously established results, which we shall demonstrate below. To derive (\ref{basic3}), we employ the strategy of restricting the energy functional to the entire space of basic functions and examining its critical points. Then we start the proof.

\begin{proof}[Proof of Theorem \ref{basic}]From \cite[Theorem 1]{Miy1}, we see that (\ref{basic1}) and (\ref{basic2}) are equivalent and that solution of (\ref{GKW}) is unique up to a constant which lies in $(\sum_{j\in J_a}\R\iota^\ast u^j)^\perp$. Clearly, (\ref{basic3}) implies (\ref{basic1}). We also see that (\ref{basic3}) and (\ref{basic4}) are equivalent since Laplacian $\Delta_{g_M}$ preserves $\Omega_B^0(M)$ if and only if the following holds for all $f\in \Omega^0_B(M)$:
\begin{align*}
\Delta_{g_M}f=\Delta_Bf.
\end{align*}
Therefore it is enough to show that (\ref{basic2}) implies (\ref{basic3}). As we remarked above, we employ the strategy of restricting the energy functional $E$ introduced in \cite{Miy1} to the entire space of basic functions and examining its critical points. Let $L^{2m}_3(M,k^\ast)$ be the subspace of $k^\ast$-valued $L^{2m}_3$-functions. Here, we denote by $L^p_k$ the Sobolev space that contains $L^p$-functions whose weak derivatives up to order $k$ have finite $L^p$-norms. We define a subspace $\Lb(M,k^\ast)$ of $L^{2m}_3(M,k^\ast)$ as follows:
\begin{align*}
\Lb(M,k^\ast)\coloneqq \{\xi\in L^{2m}_3(M,k^\ast)\mid \text{$\xi$ is a $k^\ast$-valued basic function.}\}.
\end{align*}
Let $E$ be the energy functional defined in \cite[Definition 1]{Miy1}. We denote by $\Eb$ the energy functional $E$ restricted to $\Lb$:
\begin{align*}
\Eb\coloneqq \left.E\right|_{\Lb(M,k^\ast)}.
\end{align*}
Then we see that $\Eb$ has a critical point if and only if there exists a smooth basic solution $\xi$ of (\ref{GKW}), following the same argument as in the proof of \cite[Lemma 1]{Miy1}. Note that we have used the assumption that $a_1, \dots, a_d$ and $w$ are all basic, and that $\Delta_{g_M}$ preserves $\Omega^0_B(M)$. Therefore, the problem reduces to finding a critical point of $\Eb$. We can show that $\Eb$ has a critical point under the assumption of (\ref{basic2}) by using the same argument as in the proof of \cite[Theorem 1]{Miy1}, taking care that the Laplacian $\Delta_{g_M}$ preserves the space of basic functions, and proving the same lemma as \cite[Lemma 4]{Miy1} for $E_b$. Then we see that (\ref{basic2}) implies (\ref{basic3}).
\end{proof}
\begin{rem} If $(M, g_M)$ is a Riemannian foliation, then Laplacian $\Delta_{g_M}$ preserves $\Omega_B^0(M)$.
\end{rem}
\begin{rem} Our work is motivated by the recent progress in the study of the Kobayashi-Hitchin correspondence and the gauge theory on foliated manifolds \cite{BK1, BK2, BH1, KLW1, WZ1}.
\end{rem}
\begin{rem} As a corollary of Theorem \ref{basic}, the same claim as \cite[Theorem 1]{Miy3} holds for basic Higgs bundles with basic Higgs fields, which may not necessarily be holomorphic (see \cite{BH1, BK1, BK2, WZ1} for fundamental facts about basic vector bundles and basic Higgs bundles). Note that basic vector bundles are also called foliated vector bundles or transverse vector bundles in \cite{BH1,WZ1}, and that basic Higgs bundles are called transverse Higgs bundles in \cite{WZ1}. 
\end{rem}

As indicated in \cite{Miy1, Miy4}, examples of equation (\ref{GKW}) include the Hitchin equation for a diagonal harmonic metric on cyclic Higgs bundles and its generalizations introduced in \cite{Miy4} (for cyclic Higgs bundles, see \cite{ALS1, Bar1, DL1, Miy1, Miy2, Miy4} and the references therein). In addition, we demonstrate that equation (\ref{GKW}) also encompasses the transverse Hitchin equation for a diagonal harmonic metric on basic cyclic Higgs bundles over 3-dimensional complex codimension one foliated manifolds and its generalizations:

\begin{ex}\label{example} Let $X$ be a compact connected orbifold Riemann surface. We take a family of orbifold charts $(U_i, \varphi_i:\tilde{U}_i\rightarrow U_i, \Gamma_i)_{i\in I}$ such that $X=\bigcup_{i\in I}U_i$ and that the gluing condition is satisfied (see \cite[Chapter 4]{BG1}). Let $\pi:M\rightarrow X$ be a surjective submersion from a compact connected smooth real 3-dimensional manifold $M$. Note that if we take a Riemannian metric of $X$, then the $\SO(2)$-frame bundle of $X$ is a compact connected smooth manifold (see \cite[Chapter4]{BG1}) and the projection is a surjective submersion. The submersion $\pi$ defines a natural foliation structure on $M$, which is denoted by $\F$. Let $g_X$ be a K\"ahler metric on $X$ and $\omega_X$ the corresponding K\"ahler form (see \cite{BG1, BK2} for the definition of a K\"ahler metric on a complex orbifold). For each $p\in M$, there exists an open neighborhood $V_p$ of $p$ such that $\pi(V_p)\subseteq U_i$ for an $i\in I$ and that there exists a smooth map $\tilde{\pi}:V_p\rightarrow \tilde{U}_i$ satisfying $\pi\left.\right|_{V_p}=\varphi_i\circ \tilde{\pi}$. We define a bilinear form $g_T\in\Gamma(T^\ast M\otimes T^\ast M)$ as $(g_T)_p(u,v)\coloneqq g_X(d\tilde{\pi}(u),d\tilde{\pi}(v))$ for each $p\in M$ and $u,v\in T_p M$. Similarly, we have a transverse K\"ahler form $\omega_T\in\Omega_B^2(M)$ from the K\"ahler form $\omega_X$. We denote by $\Lambda:\Omega_B^{p,q}(M)\rightarrow \Omega_B^{p-1,q-1}(M)$ the dual of $\omega_T\wedge$. We take a Riemannian metric $g_M$ on $M$ such that $g_M(s,t)=g_T(s,t)$ for all $s,t\in \Gamma(T\F^\perp)$. Let $K_i\rightarrow \tilde{U}_i$ be the canonical bundle of $\tilde{U}_i$ for each $i\in I$. For each $p\in M$, we denote by $K_{V_p}$ the pullback of $K_i$ by $\tilde{\pi}$. Then by patching together a family of complex line bundles $(K_{V_p}\rightarrow V_p)_{p\in M}$, we have a basic holomorphic line bundle over $M$, which is denoted by $K\rightarrow M$. 
The basic holomorphic line bundle $K$ is naturally regarded as a subbundle of $T^\ast M\otimes_\R \C$. Suppose that the basic first Chern class of $K$ is negative. We choose a basic holomorphic line bundle $K^{\frac{1}{2}}$ such that $K^{\frac{1}{2}}\otimes K^{\frac{1}{2}}\simeq K$. Note that in general $K^{\frac{1}{2}}$ can not be pushed forward to a holomorphic line orbibundle over $X$ (see \cite[Example 4.9 and Theorem 4.17]{BK2}). We define a basic holomorphic vector bundle $E$ as $E\coloneqq K^{\frac{r-1}{2}}\oplus K^{\frac{r-3}{2}}\oplus\cdots\oplus K^{-\frac{r-3}{2}}\oplus K^{-\frac{r-1}{2}}$. We take a basic holomorphic section $q$ of $K^r\rightarrow X$ and we define a basic Higgs field $\Phi(q)$ as 
\begin{align*}
\Phi(q)\coloneqq 
\left(
\begin{array}{cccc}
0 & && q\\
1 & \ddots && \\
&\ddots&\ddots& \\
&&1&0
\end{array}
\right),
\end{align*}
where $1$ is considered to be a section of $K\otimes K^{-1}$. We take a basic Hermitian metric $h$ on $E$ such that $h$ splits as $h=(h_1,\dots, h_r)$ with respect to the above decomposition of $E$ and that $h_j=h_{r+1-j}^{-1}$ for all $j=1,\dots, r$. From the second assumption, the following $S:E\rightarrow E^\ast$ is isometric with respect to the metric $h$:
\begin{align*}
S\coloneqq \left(
\begin{array}{ccc}
& &1\\
&\reflectbox{$\ddots$} &\\
1&&
\end{array}
\right).
\end{align*}
Let $f_1,\dots, f_r:M\rightarrow \R$ be basic functions satisfying $f_j=-f_{r+1-j} \ (j=1,\dots, r)$ and $f_1+\cdots +f_r=0$. Let $V$ be a real vector space defined as $V\coloneqq \{(x_1,\dots, x_r)\in \R^r\mid x_j=-x_{r+1-j}, x_1+\cdots +x_r=0\}$. We also define $v_j\in\R^r$ $(j=1,\dots, r)$ as
\begin{align*}
&v_j\coloneqq u_{j+1}-u_j, \ \text{for $j=1,\dots, r-1$}, \\
&v_r\coloneqq u_1-u_r,
\end{align*}
where we denote by $u_1,\dots, u_r$ the canonical basis of $\R^r$. Let $k_1,\dots, k_r$ be non-negative functions defined as 
$k_j\coloneqq |1|_{h,g_T}^2 \ (j=1,\dots, r-1)$ and 
$k_r\coloneqq |q|_{h,g_T}^2$,
where the norm is determined by the Hermitian metric $h$ and the transverse K\"ahler metric $g_T$. Then the transverse Hitchin equation \cite{BK1, WZ1} for a basic Hermitian metric $(e^{f_1}h_1,\dots, e^{f_r}h_r)$ is the following:
\begin{align}
\Delta_B\xi+\sum_{j=1}^r4k_je^{(v_j,\xi)}v_j=-2\inum\Lambda F_h,\label{basic Hitchin eq}
\end{align}
where $\xi$ is a $V$-valued function defined as $\xi\coloneqq (f_1,\dots, f_r)$, and we denote by $F_h$ the curvature of the Chern connection of the metric $h$. 
From the symmetry of the Higgs field $\Phi(q)$ for the isomorphism $S$, equation (\ref{basic Hitchin eq}) is well-defined as a PDE for a $V$-valued function $\xi$. Equation (\ref{basic Hitchin eq}) is a special case of equation (\ref{bGKW}) and one can check that in this case condition (\ref{w}) is satisfied (see also \cite[Remark 2 and Section 2]{Miy1}). Therefore equation (\ref{basic Hitchin eq}) has a unique $V$-valued solution $\xi$. From the solution of (\ref{basic Hitchin eq}), we obtain an $\SL(r,\R)$-harmonic bundle over $M$. Moreover, the generalizations of the Hitchin equation for cyclic Higgs bundles presented in \cite{Miy4} can also be applied to the equation (\ref{basic Hitchin eq}).
\end{ex}
\begin{rem} If one of $a_1,\dots, a_d,w$ is not basic, then a solution of equation (\ref{GKW}) (especially, the solution of \cite[equation (1)]{Miy4}) is not necessarily basic. The author does not know in this case how the solution of equation \cite[equation (1)]{Miy4} is related to harmonic bundles.
\end{rem}
\begin{rem} It is not necessary to restrict ourselves to 3-dimensional manifolds in Example \ref{example}. The construction in Example \ref{example} is possible for arbitrary complex codimension one foliated manifolds. If $K^{1/2}$ is a pull-back of an orbibundle of $X$, then from the solution of equation (\ref{basic Hitchin eq}) we obtain a solution of the Hitchin equation for a diagonal harmonic metric on a cyclic Higgs bundle over an orbifold Riemann surface $X$, which was introduced in \cite{ALS1}. We emphasize that a basic cyclic Higgs bundle over $M$ is not necessarily a pull-back of a cyclic Higgs bundle over $X$. We refer the reader to \cite{BK2}, which discusses the correspondence between the harmonic bundles over compact quasi-regular Sasakian manifolds and the harmonic bundles over compact K\"ahler orbifolds obtained as quotients by the circle action on quasi-regular Sasakian manifolds.
\end{rem}
\begin{rem} The above construction of diagonal harmonic metrics on basic cyclic Higgs bundles is easily generalized to basic $G$-cyclic Higgs bundles, where $G$ is a complex connected simple Lie group (see \cite[Section 2]{Miy1}, \cite{Miy2}).
\end{rem}
\begin{rem} As with usual cyclic Higgs bundles, the solution of equation (\ref{basic Hitchin eq}) can also be constructed by using the result of the Kobayashi-Hitchin correspondence for basic Higgs bundles \cite{BK1, WZ1} and the invariance of the harmonic metric under cyclic group actions (see \cite{Bar1, DL1}).
\end{rem}

\medskip
\noindent
{\bf Acknowledgements.} I would like to express my gratitude to Hisashi Kasuya for valuable discussions, and for explaining his work with I. Biswas to me.

\noindent
E-mail address 1: natsuo.miyatake.e8@tohoku.ac.jp

\noindent
E-mail address 2: natsuo.m.math@gmail.com \\

\noindent
Mathematical Science Center for Co-creative Society, Tohoku University, 468-1 Aramaki Azaaoba, Aoba-ku, Sendai 980-0845, Japan.

\end{document}